\newtheorem{thm}{Theorem}
\newtheorem{lem}[thm]{Lemma}
\newtheorem{cor}[thm]{Corollary}
\author{Yan Li
  \and Xin Zhang\thanks{This author is the corresponding
    author. Email: xzhang@xidian.edu.cn.\\
    \indent ~~~This author was supported by the National Natural Science Foundation of China (No.\,11871055).}
 }
\title[The structure and the list 3-dynamic coloring of outer-1-planar graphs]{The structure and the list 3-dynamic coloring of outer-1-planar graphs}
\affiliation{
  School of Mathematics and Statistics, Xidian University, China}
\keywords{outer-1-planar graph, local structure, dynamic coloring, list coloring}
\begin{document}
\publicationdetails{23}{2021}{3}{4}{5860}
\maketitle
\begin{abstract}
  An outer-1-planar graph is a graph admitting
a drawing in the plane so that all vertices
appear in the outer region of the drawing and every edge crosses at most one other edge. This paper establishes the local structure of outer-1-planar graphs by proving that each outer-1-planar graph contains one of the seventeen fixed configurations, and the list of those configurations is minimal in the sense that for each fixed configuration there exist outer-1-planar graphs containing this configuration that do not contain any of another sixteen configurations. There are two interesting applications of this structural theorem. First of all, we conclude that every (resp.\,maximal) outer-1-planar graph of minimum degree at least 2 has an edge with the sum of the degrees of its two end-vertices being at most 9 (resp.\,7), and this upper bound is sharp. On the other hand, we show that the list 3-dynamic chromatic number of every outer-1-planar graph is at most 6, and this upper bound is best possible.
\end{abstract}

%
%

\section{Introduction}
\label{sec:in}
A graph is \emph{1-planar} if it can be drawn in the plane such that each edge is crossed at most once. The family of 1-planar graphs is among the most investigated graph families within the so-called ``beyond planar graphs'', see \cite{DLM19,KLM17}. In this paper, we focus on a subclass of 1-planar graphs, in particular, outer-1-planar graphs.
A graph is said to be \emph{outer 1-planar} if it has a drawing in the plane so that all vertices
appear in the outer region of the drawing and every edge crosses at most one other edge; such a drawing is called an \emph{outer-1-plane graph} and the outer region of the drawing is called the \emph{outer boundary} of $G$. An outer-1-planar graph is \emph{maximal} if adding any edge (not multi-edge) to it will disturb the outer-1-planarity.
The concept of outer-1-planar graphs was first introduced by
\cite{Eggleton} who called them \emph{outerplanar graphs with edge
  crossing number one}, and also investigated under the notion of
\emph{pseudo-outerplanar graphs}, see \cite{TZ,LTC,ZPOPG}. Note that
outer-1-planar graphs are planar, see \cite{ABBGHNR2,ZPOPG}. Various
topics on outer-1-planar graphs including the recognition, see
\cite{ABBGHNR,Hong.etal}, drawing, see \cite{DE,GLM15}, structure, see
\cite{ZLLZ,ZPOPG} and coloring, see \cite{Chen19,TZ,LTC,LZ20,edge,Z17,ZL19,Z20}, are explored.

If $\mathcal{G}$ is a class of graphs such that each graph $G\in \mathcal{G}$ contains an edge $uv$ with $d(u)+d(v)\leq C$, where $C$ is a constant independent of $G$, then we say that $\mathcal{G}$ contains \emph{light edge}, or $\mathcal{G}$ is an \emph{edge-light graph class}. An edge $uv$ is of \emph{type $(a,\le b)$} if $d(u)=a$ and $d(v)\le b$. Finding edge-light graph classes is an interesting topic in the literature.

\cite{Kotzig55} proved that each 3-connected planar graph contains an edge
$uv$ such that $d(u)+d(v)\leq 13$ and this bound is sharp. \cite{FM07} showed that each 3-connected 1-planar graph contains an edge $uv$ such that $\max\{d(u),d(v)\}\leq 20$ and the bound 20 is sharp. \cite{LS17} proved that each 3-connected 1-planar graph $G$ contains an edge $uv$ of type $(3,\leq 22)$, $(4,\le 13)$, $(5,\le 9)$, $(6,\le 8)$ or $(7,7)$ and thus $d(u)+d(v)\leq 25$. Replacing the 3-connectedness with a condition on the minimum degree, \cite{HS12} proved that each 1-planar graph of minimum degree at least 4 contains an edge
of type $(4,\le 13)$, $(5,\le 9)$, $(6,\le 8)$ or $(7,7)$, where the bound 9, 8 and 7 in the last three types are sharp. Very recently, \cite{BDHS20} showed that each  1-planar graph of minimum degree at least 3 contains an edge $uv$ such that $\max\{d(u),d(v)\}\leq 29$. \cite{NZ20}
showed that each 1-planar graph of minimum degree at least 3 contains an edge
of type $(3,\le 23)$, $(4,\le 11)$, $(5,\le 9)$, $(6,\le 8)$ or
$(7,7)$. It is not clear whether the bound 23 and 11 in the first two
types are sharp, and the authors conjectured that they may be improved to 20 and 10, respectively.

For subclasses of planar graphs, it is well known, see \cite{Wang1999}, that each outerplanar graph of minimum degree at least 2 contains en edge $uv$ such that $d(u)+d(v)\leq 6$ and this bound is sharp.
\cite{ZLLZ} proved that (i) each outer-1-planar graph of minimum degree at least 2 has an edge $uv$ such that $d(u)+d(v)\leq 9$
 and the bound is sharp; (ii) every maximal outer-1-planar graph $G$ has an edge $uv$ such that $d(u)+d(v)\leq 7$ and the bound is sharp.

The aim of this paper is to improve the above two results of \cite{ZLLZ} to a more detailed form, which not only confirms the existence of such a light edge but also shows in which configuration it is contained (see Theorem \ref{str-1} in Section \ref{sec-2}). Actually, our result implies that each outer-1-planar graph of minimum degree at least 2 contains an edge
of type $(2,\le 7)$ or $(3,3)$, and each maximal outer-1-planar graph  contains an edge
of type $(2,\le 5)$ or $(3,3)$, and all bounds are sharp.

On the other hand, this structural theorem is applicable to an
interesting problem the so-called list 3-dynamic coloring of graphs,
which has many applications to the channel assignment problems, see \cite{ZB18,ZB20}. For the continuity of this paper, we introduce the list 3-dynamic coloring in Section \ref{sec-3}, where we give a sharp upper bound for the list 3-dynamic chromatic number of outer-1-planar graphs  (see Theorem \ref{list-3-dynamic} in Section \ref{sec-3}).

\section{Structural Theorem}\label{sec-2}

If an outer-1-plane graph $G$ is 2-connected, then we denote by $v_1, v_2, \ldots, v_{|G|}$ the vertices in the outer boundary of $G$ consecutively in a clockwise order, and then
let $\mathcal{V}[v_i,v_j]=\{v_i, v_{i+1}, \ldots, v_j\}$ and $\mathcal{V}(v_i,v_j)=\mathcal{V}[v_i,v_j]\backslash \{v_i,v_j\}$ (we take modulo $|G|$ for the subscripts).
The subgraph of $G$ induced by $\mathcal{V}[v_i,v_j]$ is denoted by $G[v_i,v_j]$.

If there is no edge between $\mathcal{V}(v_i,v_l)$ and $\mathcal{V}(v_l,v_i)$, where $i<l$, then we denote by
 $\widehat{G}[v_i,v_l]$ the graph derived from $G[v_i,v_l]$ via adding an edge $v_iv_l$ if it does not originally exist in $G$. Clearly,  $\widehat{G}[v_i,v_l]$ is also a 2-connnected outer-1-plane graph.

Given a vertex set $\mathcal{V}[v_i,v_j]$ with $i\neq j$, if $j=i+1$ (mod $|G|$) and $v_iv_j\not\in E(G)$, then we call it a \emph{non-edge}, and if $v_k v_{k+1}\in E(G)$ for all $i\leq k<j$, then we call it a \emph{path}. If $xy\in E(G)$ and $x,y$ are not two consecutive vertices in the outer boundary of $G$, then we call $xy$ a \emph{chord}. A chord that is crossed in $G$ is called a \emph{crossed chord}. The set of  chords $xy$ with $x,y\in \mathcal{V}[v_i,v_j]$ is denoted by $\mathcal{C}[v_i,v_j]$

\begin{lem}\label{path}\cite[Claim 1]{ZPOPG}
Let $G$ be a $2$-connected outer-$1$-plane graph and let $v_i,v_j$ be vertices of $G$. If each chord in $\mathcal{C}[v_i,v_j]$ is not crossed and there is no edge between
$\mathcal{V}(v_i,v_j)$ and $\mathcal{V}(v_{j},v_{i})$, then $\mathcal{V}[v_i,v_j]$ is either a path or a non-edge.
\end{lem}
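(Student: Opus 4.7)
The plan is to argue by contradiction using the combinatorial/topological structure of the outer face in the given drawing.

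First I would dispatch the trivial case $j=i+1$: then $\mathcal{V}[v_i,v_j]=\{v_i,v_{i+1}\}$ is by definition a \emph{path} if $v_iv_{i+1}\in E(G)$ and a \emph{non-edge} otherwise, so the conclusion holds without using (a) or (b). So assume $j>i+1$ and suppose, for a contradiction, that $v_kv_{k+1}\notin E(G)$ for some $i\le k<j$. My goal is to produce two chords in $\mathcal{C}[v_i,v_j]$ that cross, contradicting hypothesis (a).

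The key structural input I need is a description of the outer face boundary between two consecutive outer-face vertices $v_k, v_{k+1}$. Normalising the drawing so that vertices lie on a circle with edges in its interior, and using 2-connectedness so that the outer face of the planarisation is bounded by a simple closed walk, I would show that this piece of boundary is either the edge $v_kv_{k+1}$ itself or passes through exactly one crossing. Under my assumption the first alternative fails, so there is a crossing $p$ on the outer boundary. The first edge clockwise at $v_k$ cannot be uncrossed (otherwise it would lead directly to another original vertex appearing on the outer boundary strictly between $v_k$ and $v_{k+1}$, contradicting consecutiveness), so it is a crossed edge $v_kv_a$ with $a\ne k+1$; because each edge carries at most one crossing, its crossing partner extends from $p$ directly to an original vertex, and by the same consecutiveness reason that vertex has to be $v_{k+1}$. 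Thus $p=v_kv_a\cap v_{k+1}v_c$ for some $v_c$, and $v_k,v_{k+1},v_a,v_c$ appear in this cyclic order on the outer cycle.

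With this in hand the finish is short. Since $j>i+1$, at least one of $v_k,v_{k+1}$ lies in $\mathcal{V}(v_i,v_j)$, and hypothesis (b) forces its neighbour ($v_a$ or $v_c$ respectively) to lie in $\mathcal{V}[v_i,v_j]$. A short index chase with the cyclic order $(v_k,v_{k+1},v_a,v_c)$, handling the boundary sub-cases $k=i$ and $k+1=j$ symmetrically, places the other of $\{v_a,v_c\}$ in $\mathcal{V}[v_i,v_j]$ as well, since it must lie on the clockwise arc between $v_{k+1}$ and $v_c$ (or its mirror). Therefore both $v_kv_a$ and $v_{k+1}v_c$ are chords in $\mathcal{C}[v_i,v_j]$ and they cross, contradicting (a). The main obstacle I expect is the topological step establishing the ``one-crossing'' picture of the outer boundary between consecutive outer-face vertices when the direct edge is missing; once this is justified from the at-most-one-crossing-per-edge property combined with 2-connectedness, the remainder is a routine interleaving-of-chords argument on the outer cycle.
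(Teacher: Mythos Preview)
The paper does not supply a proof of this lemma; it is quoted from \cite[Claim~1]{ZPOPG} without argument. Your proposal is a correct self-contained proof. The topological step you flag as the main obstacle is indeed the heart of the matter, and your justification is sound: leaving $v_k$ clockwise along the outer boundary one follows the extreme incident edge $v_kv_a$; if it were uncrossed its other end would be the next outer vertex, forcing $a=k+1$ contrary to assumption; hence it is crossed at some point $p$, and since the four half-edges at $p$ alternate between the two crossing edges, the outer boundary switches there to the partner edge, whose next endpoint (reached with no further crossing by the outer-$1$-planarity) must then be $v_{k+1}$. The interleaving order $v_k,v_{k+1},v_a,v_c$ and the index chase in the boundary cases $k=i$ and $k+1=j$ are handled correctly.

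One small economy is available. The hypothesis asserts that $\mathcal{C}[v_i,v_j]$ contains no \emph{crossed} chord at all (not merely no pair of mutually crossing chords), so exhibiting a single crossed edge with both ends in $\mathcal{V}[v_i,v_j]$ already yields the contradiction. When $k>i$ the edge $v_kv_a$ does this directly via hypothesis~(b), and when $k=i$ (so $v_{k+1}\in\mathcal{V}(v_i,v_j)$ since $j>i+1$) the edge $v_{k+1}v_c$ does; the final step placing \emph{both} of $v_a,v_c$ inside $\mathcal{V}[v_i,v_j]$ is therefore not strictly needed, though it is correct as written.
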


If $G$ contains a configuration $G_i$ as shown in Fig.\,\ref{structure-1} such that any hollow (resp.\,solid black) vertex has the degree in $G$ at least (resp.\,exactly) the number of edges incident with it there,
and any solid grey vertex has the degree in $G$ as marked by Fig.\,\ref{structure-1},
then we say that $G$ \emph{contains} $G_i$.
For two vertices $v_a,v_b\in V(G)$, saying $G[v_a,v_b]$ \emph{properly contains} $G_i$, we mean that $G[v_a,v_b]$ contains $G_i$ so that neither $v_a$ nor $v_b$ corresponds to a solid black or grey vertex in the picture of $G_i$.

\begin{figure}
  \centering
  \includegraphics[width=15cm]{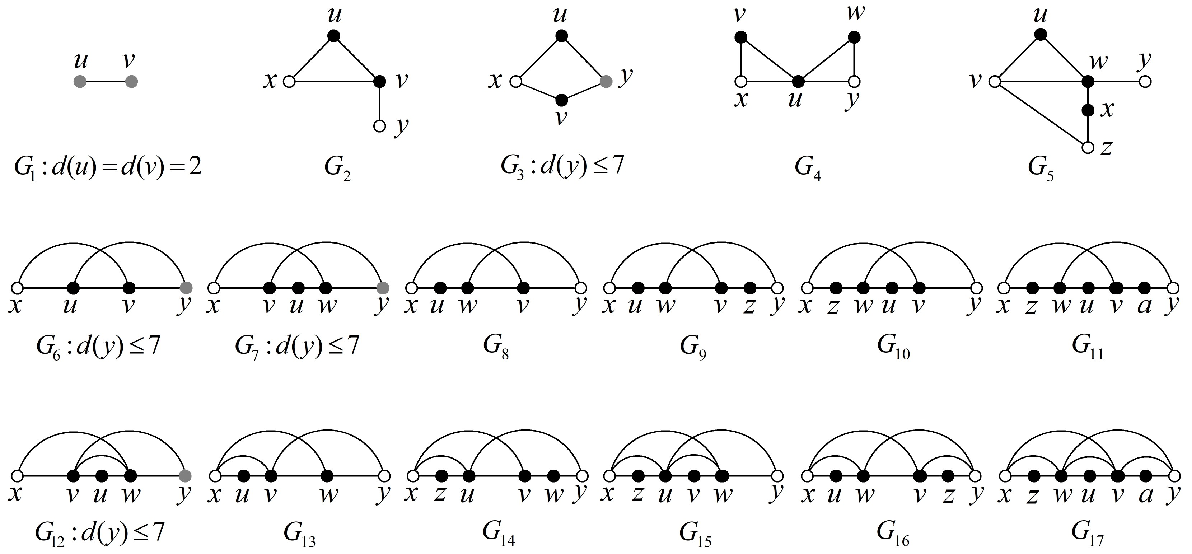}
  \caption{Local structures in an outer-1-planar graph $G$ with $\delta(G)\geq 2$}\label{structure-1}
\end{figure}

\begin{lem}\label{n-path}
If $G$ contains a path $v_{1} v_{2} \ldots v_{n}$ with $n\geq 5$ such that $v_iv_{i+1}$ is not a chord for each $1\leq i\leq n-1$, and
there are no chords $v_{i}v_{j}$ and $v_{k}v_{l}$ such that
$1\leq i<k<j<l\leq n$, then $G[v_{1},v_{n}]$ properly contains $G_{1}, G_{2}$ or  $G_{4}$.
\end{lem}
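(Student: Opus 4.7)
My plan is a case analysis on the chord set $\mathcal{C}[v_{1},v_{n}]$. The key tool is the non-interleaving hypothesis, which forces any chord configuration to be ``laminar'' and therefore amenable to a minimum-span argument.

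First, if $\mathcal{C}[v_{1},v_{n}]=\emptyset$, then $G[v_{1},v_{n}]$ is just the path $v_{1}v_{2}\cdots v_{n}$, and since $n\geq 5$, the three interior vertices $v_{2},v_{3},v_{4}$ are consecutive vertices of degree exactly $2$ in $G[v_{1},v_{n}]$. I expect this to match $G_{1}$, which presumably depicts three consecutive degree-$2$ solid vertices along a path; proper containment is automatic since neither $v_{1}$ nor $v_{n}$ is among them.

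Otherwise, I select a chord $v_{p}v_{q}\in\mathcal{C}[v_{1},v_{n}]$ (with $q\geq p+2$) minimizing $q-p$. The main observation is that every interior vertex $v_{k}$ with $p<k<q$ has no chord neighbor in $\mathcal{V}[v_{1},v_{n}]$: a chord from $v_{k}$ to some $v_{m}\in\mathcal{V}(v_{p},v_{q})$ would contradict the minimality of $q-p$, while a chord from $v_{k}$ to some $v_{m}\in\mathcal{V}[v_{1},v_{p})\cup\mathcal{V}(v_{q},v_{n}]$ would form an interleaving pair with $v_{p}v_{q}$, contradicting the hypothesis. Hence each such $v_{k}$ has degree exactly $2$ in $G[v_{1},v_{n}]$. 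I then branch on $q-p$: when $q-p\geq 4$, the three vertices $v_{p+1},v_{p+2},v_{p+3}$ yield $G_{1}$; when $q-p=3$, the chord $v_{p}v_{q}$ together with the interior degree-$2$ vertices $v_{p+1},v_{p+2}$ yields $G_{4}$; and when $q-p=2$, the triangle $v_{p}v_{p+1}v_{q}$ with $v_{p+1}$ solid of degree $2$ yields $G_{2}$. In every sub-case the solid vertices of the configuration lie in $\mathcal{V}(v_{p},v_{q})\subseteq\mathcal{V}(v_{1},v_{n})$, securing proper containment.

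The main obstacle I anticipate is confirming that each of $G_{1},G_{2},G_{4}$ in Figure~\ref{structure-1} is structurally identical to what my case analysis produces---in particular, matching the exact placement of solid versus hollow vertices and checking that none of these three configurations contains a marked hollow vertex (since only $G_{3},G_{6},G_{7},G_{12}$ are flagged in the definition as having such a vertex, this should indeed be the case, which is precisely why the lemma's conclusion is restricted to $G_{1},G_{2},G_{4}$). A minor secondary subtlety is fixing the convention that ``degree in $G$'' in the containment definition refers to degree in the graph for which containment is being tested, namely $G[v_{1},v_{n}]$; this is what allows the degree-$2$ accounting to go through without having to rule out edges leaving $\mathcal{V}[v_{1},v_{n}]$.
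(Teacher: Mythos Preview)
Your minimum-span chord idea is natural, but the case $q-p=2$ has a genuine gap, and it stems from a misreading of the configurations. In the paper, $G_{2}$ is a triangle carrying \emph{two} solid vertices: one of degree~$2$ and one of degree~$3$ (look at every place the paper invokes $G_{2}$; it always first verifies ``$d(v_{j})=3$'' or ``$d(v_{2})=3$''). Your argument only establishes $d(v_{p+1})=2$; it says nothing about $d(v_{p})$ or $d(v_{q})$. Concretely, take $n=7$ with chord set $\{v_{1}v_{3},\,v_{3}v_{5},\,v_{5}v_{7}\}$. Every chord has span~$2$, yet whichever one you pick, the non-apex triangle vertices are either endpoints $v_{1},v_{7}$ (forbidden as solid vertices by proper containment) or have degree~$4$. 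So $G_{2}$ is not properly contained via your triangle, and your proof stalls. What actually holds in this example is $G_{4}$: two span-$2$ triangles sharing a vertex of degree exactly~$4$, each with a degree-$2$ apex. Your single-chord argument never looks at two chords simultaneously, so it cannot find $G_{4}$.

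Relatedly, your guesses for $G_{1}$ and $G_{4}$ are off: $G_{1}$ is just \emph{two} adjacent degree-$2$ vertices (so your ``no chords'' and $q-p\geq 3$ cases both give $G_{1}$, and your $q-p=3$ case does not give $G_{4}$). The paper closes the $q-p=2$ gap by induction on~$n$: assuming $q\neq n$, if $d(v_{q})=3$ one gets $G_{2}$; if $d(v_{q})\geq 4$ there is a further chord $v_{q}v_{k}$, and one either finds two adjacent span-$2$ triangles at $v_{q}$ (yielding $G_{4}$), or applies the induction hypothesis to the shorter interval $[v_{q},v_{k}]$, or locates $G_{1}$/$G_{2}$ directly when $k-q\in\{1,3\}$. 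Your direct approach can be repaired along the same lines, but it needs this extra layer of case analysis; the bare minimum-span step is not enough.
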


\begin{proof}
We prove it by induction on $n$.

\textbf{Case 1.} $n=5$.

If $\mathcal{C}[v_{1},v_{n}]=\emptyset$, then $d(v_{2})=d(v_{3})=2$ and $G_{1}$ is properly contained in $G[v_{1},v_{n}]$. Otherwise,
choose a chord $v_{i}v_{j}\in \mathcal{C}[v_{1},v_{n}]$ so that $\mathcal{C}[v_{i},v_{j}]={\{v_{i}v_{j}}\}$ and $i<j$. Now, if $j-i\geq 3$, then $d(v_{i+1})=d(v_{i+2})=2$ and $G[v_{1},v_{n}]$ properly contains $G_{1}$. If $j-i=2$, then $d(v_{i+1})=2$, and either $i\neq 1$ or $j\neq n$. By symmetry, we assume the latter. If $d(v_{j})=3$, then $G[v_{1},v_{n}]$ properly contains $G_{2}$. If $d(v_{j})=4$, then by symmetry, we consider two subcases. First, if $j=3$, then $v_{3}v_{5}\in E(G)$ and $d(v_{4})=2$, which implies the proper containment of $G_{4}$. Second, if $j=4$, then $i=2$ and $v_{1}v_{4}\in E(G)$. This concludes that $d(v_{2})=3$ and $G_{2}$  is properly contained in $G[v_{1},v_{n}]$.

\textbf{Case 2.} $n\geq 6$.

Suppose that we have proved the lemma for every $n'$ with $5\leq n'<n$.

We assume that there is a chord $v_{i}v_{j}\in \mathcal{C}[v_{1},v_{n}]$ so that $j-i=2$, otherwise we can finish the proof as in Case 1. Assume, without loss of generality, that $j\neq n$. If $d(v_{j})=3$, then $G[v_{1},v_{n}]$ properly contains $G_{2}$. Hence we assume $d(v_{j})\geq 4$. Therefore, there exists a chord $v_{j}v_{k}$ with $1\leq k<i$ or $j<k\leq n$.

If $1\leq k<i$, then $i\neq 1$, which implies $d(v_{i})\geq 4$, because otherwise $d(v_{i})=3$ and thus $G_{2}$ is properly contained. Whereafter, there is a chord $v_{i}v_{t}$ with $k\leq t<i$. Hence, there exists either a chord $v_{j}v_{k}$ with $j<k\leq n$, or a chord $v_{i}v_{t}$ with $1\leq t<i$. By symmetry, we assume the former.

If $k-j\geq 4$, then $G[v_{j},v_{k}]$ properly contains $G_{1},G_{2} $ or $ G_{4}$ by the induction hypothesis, and so does $G[v_{1},v_{n}]$, since any vertex in $\mathcal{V}(v_{j},v_{k})$ has the same degree both in $G[v_{j},v_{k}]$ and $G[v_{1},v_{n}]$.

If $k-j=3 $, then either $d(v_{j+1})=d(v_{j+2})=2$, or $d(v_{j+1})=2$, $d(v_{j+2})=3$ and $v_jv_{j+2}\in E(G)$, or $d(v_{j+1})=3$, $d(v_{j+2})=2$ and $v_{j+1}v_{k}\in E(G)$. In each case, we conclude that $G[v_{j},v_{k}]$ properly contains $G_{1}$ or $G_{2}$, and so does $G[v_{1},v_{n}]$.

If $k-j=2$, then $d(v_{j+1})=2$. At this moment, if $d(v_{j})=4$, then $G_{4}$  is properly contained in $G[v_{1},v_{n}]$. Otherwise, there is a chord $v_{j}v_{s}$ such that $k<s\leq n$ or $1\leq s<i$. By symmetry, assume the former. If $s-k\geq 2$, then $|\mathcal{V}[v_{j},v_{s}]|\geq 5$ and by the induction hypothesis, $G[v_{j},v_{s}]$ properly contains $G_{1},G_{2}$ or $ G_{4}$, and so does $G[v_{1},v_{n}]$.
If $s-k=1$, then $d(v_{k})=3$ and $G_{2}$ is properly contained in $G[v_{1},v_{n}]$.
\end{proof}

Let $G$ be a 2-connected outer-1-plane graph. By $\mathfrak{D}_1$ and $\mathfrak{D}_2$, we define two possible properties for the drawing of $G$. They are stated as follows.

\begin{description}
  \item[$\mathfrak{D}_1$] If $G$ contains $G_3$, then the graph derived from $G$ by adding a new edge between $u$ and $v$ in that picture is still outer-1-planar;
  \item[$\mathfrak{D}_2$] If $G$ contains $G_i$ for some $6\leq i\leq 17$, then the picture of $G_i$ in Fig.\,\ref{structure-1} corresponds to a partial drawing (up to inversion) of $G$ in the plane.
\end{description}

\begin{thm}\label{2-connected}
Let $G$ be a $2$-connected outer-$1$-plane graph and let $v_1,v_2,\ldots,v_n$ ($n=|G|$) be its vertices appearing in the outer boundary of $G$ consecutively in that order.
\begin{description}
  \item[(1)] If $n=4$, then $G[v_1,v_4]$ properly contains $G_1$ or $G_2$, unless $v_1v_3$ crosses $v_2v_4$ and $v_1v_2,v_3v_4\in E(G)$;
  \item[(2)] If $n=5$, then $G[v_1,v_5]$ properly contains one of the configurations among $G_{1}-G_{4},G_6,G_8,G_{13}$ such that $\mathfrak{D}_1$ and $\mathfrak{D}_2$ hold unless $\mathcal{V}[v_1,v_5]$ is a path and $v_1v_4,v_2v_5\in E(G)$;
  \item[(3)] If $n\geq 6$, then $G[v_1,v_n]$ properly contains one of the configurations among $G_{1}-G_{17}$ such that $\mathfrak{D}_1$ and $\mathfrak{D}_2$ hold.
\end{description}
\end{thm}

\begin{proof}
If no crossing appears in $G$, then $v_{1} v_{2}\cdots v_{n}$ is a path by the 2-connectedness of $G$, and thus $G[v_1,v_n]$ properly contains $G_1$, $G_2$ or $G_4$ by Lemma \ref{n-path} if $n\geq 5$. On the other hand, if $n=4$, then $G[v_1,v_4]$ properly contains $G_2$ if $v_1v_3\in E(G)$ or $v_2v_4\in E(G)$, and $G_1$ otherwise.
Hence in the following we assume that there is a pair of crossed chords $v_{i}v_{j}$ and $v_{k}v_{l}$ with $1\leq i<k<j<l\leq n$,

\textbf{Case 1. $n=4$.}

Suppose that $v_1v_3$ crosses $v_2v_4$. If $v_1v_2\not\in E(G)$, then $v_2v_3\in E(G)$ by the 2-connectedness of $G$. Therefore, $G[v_1,v_4]$ properly contains $G_1$ if $v_3v_4\not\in E(G)$, and $G_2$ otherwise. Hence we assume $v_1v_2\in E(G)$. By symmetry, it is also assumed that $v_3v_4\in E(G)$. This is in accordance with the
excluded case listed in (1).

\textbf{Case 2.} $n=5$.

By symmetry, we analyse three subcases as follows.

\textbf{Subcase 2.1.} $v_{1}v_{3}$ crosses $v_{2}v_{4}$.

By (1), $\hat{G}[v_1,v_4]$ properly contains $G_1$ or $G_2$ (and so does $G[v_1,v_4]$), unless $v_1v_3$ crosses $v_2v_4$ and $v_1v_2,v_3v_4\in E(G)$, in which case we have $d(v_4)\leq 4$. Therefore, $G[v_1,v_5]$ properly contains $G_3$ if $v_2v_3\not\in E(G)$, and $G_6$ otherwise. Moreover, $\mathfrak{D}_1$ and $\mathfrak{D}_2$ hold.

\textbf{Subcase 2.2.}  $v_{1}v_{3}$ crosses $v_{2}v_{5}$.

By the 2-connectedness of $G$, $v_3v_4,v_4v_5\in E(G)$.

If $v_2v_3\not\in E(G)$, then $d(v_{3})=d(v_{4})=2$ if $v_{3}v_{5}\not\in E(G)$, and $d(v_{3})=3,d(v_{4})=2$ if $v_{3}v_{5}\in E(G)$. Therefore, $G[v_{1},v_{n}]$ properly contain $G_1$ in the former case, and $G_2$ in the latter case.

If $v_2v_3\in E(G)$ and $v_1v_2\not\in E(G)$, then $d(v_2)=d(v_4)=2$, $d(v_3)\leq 4$, and thus  $G_3$ is properly contained in $G[v_{1},v_{n}]$. We confirm that $\mathfrak{D}_1$ holds by
showing that $G+v_2v_4$ is still outer-1-planar. Actually, adjusting the order of the vertices in the outer boundary from $v_1,v_2,v_3,v_4,v_5$ to $v_1,v_3,v_2,v_4,v_5$, we obtain an outer-1-planar drawing of the graph $G+v_2v_4$.

If $v_1v_2,v_2v_3\in E(G)$, then $G[v_{1},v_{n}]$ properly contains $G_8$ if $v_3v_5\not\in E(G)$, and $G_{13}$ if $v_3v_5\in E(G)$.

\textbf{Subcase 2.3.}  $v_{1}v_{4}$ crosses $v_{2}v_{5}$.

By the 2-connectedness of $G$, $v_2v_3,v_3v_4\in E(G)$.

If $v_4v_5\not\in E(G)$ (the case that $v_1v_2\not\in E(G)$ is symmetric), then $d(v_{3})=d(v_{4})=2$ if $v_{2}v_{4}\not\in E(G)$, and $d(v_{3})=2,d(v_{4})=3$ if $v_{2}v_{4}\in E(G)$. Therefore, $G[v_{1},v_{n}]$ properly contain $G_1$ in the former case, and $G_2$ in the latter case.

If $v_1v_2,v_4v_5\in E(G)$, then we meet the excluded case mentioned in (2).

\textbf{Case 3.} $n=6$.


If $|\mathcal{V}[v_i,v_l]|=4$, then by (1), $\hat{G}[v_i,v_l]$ properly contains $G_1$ or $G_2$ (and so does $G[v_i,v_l]$), unless $v_iv_k,v_jv_l\in E(G)$, in which case we have $d(v_i),d(v_l)\leq 5$. Since either $i\neq 1$ or $l\neq 6$, $G[v_1,v_6]$ properly contains $G_3$ if $v_kv_j\not\in E(G)$, and $G_6$ otherwise. Moreover, $\mathfrak{D}_1$ and $\mathfrak{D}_2$ hold trivially.

If $|\mathcal{V}[v_i,v_l]|=5$, then by (2),  $\hat{G}[v_i,v_l]$ properly contains one of the configurations among $G_{1}-G_{4},G_6,G_8,G_{13}$ (and so does $G[v_i,v_l]$) such that $\mathfrak{D}_1$ and $\mathfrak{D}_2$ hold, unless
$\mathcal{V}[v_i,v_l]$ is a path and $k=i+1,j=l-1$, in which case we have $d(v_i),d(v_l)\leq 4$. Since either $i\neq 1$ or $l\neq 6$, $G[v_1,v_6]$ properly contains $G_7$ if $v_kv_j\not\in E(G)$, and $G_{12}$ otherwise. Moreover, $\mathfrak{D}_2$ holds.

If $|\mathcal{V}[v_i,v_l]|=6$, then $i=1$ and $l=6$. By symmetry, we analyse four subcases as follows.

\textbf{Subcase 3.1.} $v_{1}v_{3}$ crosses $v_{2}v_{6}$.

By (1), $\hat{G}[v_3,v_6]$ properly contains $G_1$ or $G_2$ (and so does $G[v_3,v_6]$), unless $v_3v_5$ crosses $v_4v_6$ and $v_3v_4,v_5v_6\in E(G)$, in which case we have $d(v_3)\leq 5$. Hence $G[v_1,v_6]$ properly contains $G_3$ if $v_4v_5\not\in E(G)$, and $G_6$ otherwise. Moreover, $\mathfrak{D}_1$ and $\mathfrak{D}_2$ hold trivially.

\textbf{Subcase 3.2.} $v_{1}v_{4}$ crosses $v_{2}v_{6}$.

By the 2-connectedness of $G$, $v_2v_3,v_3v_4,v_4v_5,v_5v_6\in E(G)$. If $v_1v_2\not\in E(G)$, then $d(v_2)=d(v_3)=2$ if $v_2v_4\not\in E(G)$, and $d(v_2)=3$, $d(v_3)=2$ if $v_2v_4\in E(G)$. This implies that $G[v_1,v_6]$ properly contains $G_1$ in the former case, and $G_2$ in the latter case. Hence we assume $v_1v_2\in E(G)$. This implies that
$G[v_1,v_6]$ properly contains $G_{10}$ if $v_2v_4,v_4v_6\not\in E(G)$, $G_5$ if $|\{v_2v_4,v_4v_6\}\cap E(G)|=1$, and $G_{15}$ if $v_2v_4,v_4v_6\in E(G)$. Moreover, $\mathfrak{D}_2$ holds trivially.

\textbf{Subcase 3.3.} $v_{1}v_{4}$ crosses $v_{3}v_{6}$.

By the 2-connectedness of $G$, $v_1v_2,v_2v_3,v_4v_5,v_5v_6\in E(G)$. If $v_3v_4\not\in E(G)$, then $d(v_2)=d(v_3)=2$ if $v_1v_3\not\in E(G)$, and $d(v_2)=2$, $d(v_3)=3$ if $v_1v_3\in E(G)$, which implies that $G[v_1,v_6]$ properly contains $G_1$ in the former case, and $G_2$ in the latter case. Hence we assume $v_3v_4\in E(G)$. This implies that
$G[v_1,v_6]$ properly contains $G_{9}$ if $v_1v_3,v_4v_6\not\in E(G)$, $G_{14}$ if $|\{v_1v_3,v_4v_6\}\cap E(G)|=1$, and $G_{16}$ if $v_1v_3,v_4v_6\in E(G)$. Moreover, $\mathfrak{D}_1$ and $\mathfrak{D}_2$ hold trivially.

\textbf{Subcase 3.4.} $v_{1}v_{5}$ crosses $v_{2}v_{6}$.

By (1), $\hat{G}[v_2,v_5]$ properly contains $G_1$ or $G_2$ (and so does $G[v_2,v_5]$), unless $v_2v_4$ crosses $v_3v_5$ and $v_2v_3,v_4v_5\in E(G)$, in which case we have $d(v_2)\leq 5$. Hence $G[v_1,v_6]$ properly contains $G_3$ if $v_3v_4\not\in E(G)$, and $G_6$ otherwise. Moreover, $\mathfrak{D}_1$ and $\mathfrak{D}_2$ hold trivially.



\textbf{Case 4.} $n\geq 7$.

 Suppose that we have proved the lemma for every $n'$ with $6\leq n'<n$. 

\vspace{3mm}\noindent \textbf{Claim A.} \emph{If $v_{i}v_{j}$ crosses $v_{k}v_{l}$ ($1\leq i<k<j<l\leq n$), then $G[v_{1},v_{n}]$ properly contains at least one of the configurations among $G_{1}-G_{17}$ such that $\mathfrak{D}_1$ and $\mathfrak{D}_2$ hold, unless $4\leq |\mathcal{V}[v_{i},v_{l}]|\leq 5$, $k=i+1,j=l-1$, and $v_{i}v_{k},v_{j}v_{l}\in E(G)$,
in which case we say that $v_iv_j$ \underline{co-crosses} $v_kv_l$ in $G$.}

\begin{proof}

If $\max{\{|\mathcal{V}[v_{i},v_{k}]|,|\mathcal{V}[v_{k},v_{j}]|,|\mathcal{V}[v_{j},v_{l}]|}\}\geq 6 $, then we consider, without loss of generality, the case that $|\mathcal{V}[v_{i},v_{k}]|\geq 6$. Applying the
induction hypothesis to $\widehat{G}[v_i,v_k]$ (note that there is no edge between $\mathcal{V}(v_i,v_k)$ and $\mathcal{V}(v_k,v_i)$), we conclude that it properly contains one of the configurations among $G_{1}-G_{17}$ such that $\mathfrak{D}_1$ and $\mathfrak{D}_2$ hold, and so does $G[v_{1},v_{n}]$, since any vertex in $\mathcal{V}(v_i,v_k)$  has same degree both in $\widehat{G}[v_i,v_k]$ and $G[v_{1},v_{n}]$. Hence, we are only care about the case $\max{\{|\mathcal{V}[v_{i},v_{k}]|,|\mathcal{V}[v_{k},v_{j}]|,|\mathcal{V}[v_{j},v_{l}]|}\}\leq 5$.

\textbf{Subcase 4.1.} $\max{\{|\mathcal{V}[v_{i},v_{k}]|,|\mathcal{V}[v_{k},v_{j}]|,|\mathcal{V}[v_{j},v_{l}]|}\}=5$.

We only consider the case $|\mathcal{V}[v_{i},v_{k}]|= 5$, and the cases that $|\mathcal{V}[v_{k},v_{j}]|= 5$ or $|\mathcal{V}[v_{j},v_{l}]|= 5$ can be considered similarly.
By (2), $\widehat{G}[v_i,v_k]$ properly contains one of the configurations among $G_{1}-G_{4},G_6,G_8,G_{13}$ such that $\mathfrak{D}_1$ and $\mathfrak{D}_2$ hold (and so does $G[v_i,v_k]$) unless $\mathcal{V}[v_i,v_k]$ is a path such that $v_iv_{k-1},v_{i+1}v_k\in E(G)$. If $d(v_k)\leq 7$, then $G[v_1,v_n]$ properly contains $G_7$ if $v_{i+1}v_{k-1}\not\in E(G)$, and $G_{12}$ otherwise. It is easy to see that $\mathfrak{D}_2$ holds now.
Therefore, we assume that $d(v_k)\geq 8$. This implies that $j=k+4$ (note that $|\mathcal{V}[v_k,v_j]|\leq 5$)
and that $v_k$ is adjacent to $v_i,v_{k+1},v_{k+2},v_{k+3},v_j,v_l$. By (2), $\widehat{G}[v_k,v_j]$ properly contains one of the configurations among $G_{1}-G_{4},G_6,G_8,G_{13}$ (and so does $G[v_k,v_j]$), since $|\mathcal{V}[v_k,v_j]|=5$ and $\widehat{G}[v_k,v_j]$ cannot contain the excluded structure mentioned in (2).

\textbf{Subcase 4.2.} $\max{\{|\mathcal{V}[v_{i},v_{k}]|,|\mathcal{V}[v_{k},v_{j}]|,|\mathcal{V}[v_{j},v_{l}]|}\}=4$.

We only consider the case $|\mathcal{V}[v_{i},v_{k}]|= 4$, and the cases that $|\mathcal{V}[v_{k},v_{j}]|= 4$ or $|\mathcal{V}[v_{j},v_{l}]|= 4$ can be considered similarly.
By (1), $\widehat{G}[v_i,v_k]$ properly contains $G_1$ or $G_2$, unless $v_iv_{k-1}$ crosses $v_{i+1}v_k$ and $v_iv_{i+1},v_{k-1}v_k\in E(G)$. Since $|\mathcal{V}[v_k,v_j]|\leq 4$, $d(v_k)\leq 7$. Therefore, $G[v_1,v_n]$ properly contains $G_3$ if $v_{i+1}v_{k-1}\not\in E(G)$, and $G_6$ otherwise. Moreover, $\mathfrak{D}_1$ and $\mathfrak{D}_2$ hold.

\textbf{Subcase 4.3.} $|\mathcal{V}[v_{i},v_{k}]|=3$, and $|\mathcal{V}[v_{k},v_{j}]|,|\mathcal{V}[v_{j},v_{l}]|\leq 3$.

By the 2-connectedness of $G$, $\mathcal{V}[v_i,v_k]$ is a path.

Assume first that $|\mathcal{V}[v_{k},v_{j}]|=3$. Clearly,  $\mathcal{V}[v_k,v_j]$ is also a path since $G$ is 2-connected.

If exactly one from $v_iv_k$ and $v_kv_j$, say $v_iv_k$, is an edge of $G$, then $d(v_{i+1})=d(v_{k+1})=2$ and $d(v_{k})=4$,  which implies the proper containment of $G_5$ in $G[v_{1},v_{n}]$.

If $v_{i}v_{k}\not\in E(G)$ and $v_{k}v_{j}\not\in E(G)$, then we look at $|\mathcal{V}[v_{j},v_{l}]|$.
If $|\mathcal{V}[v_{j},v_{l}]|=3$, then $\mathcal{V}[v_j,v_l]$ is  a path by the 2-connectedness of $G$, and thus $G[v_1,v_n]$ properly contains $G_{5}$ if $v_jv_l\in E(G)$, and $G_{11}$ otherwise.
If $|\mathcal{V}[v_{j},v_{l}]|=2$, then $G[v_1,v_n]$ properly contains $G_{10}$ if $v_jv_l\in E(G)$, and $d(v_j)=d(v_{j-1})=2$ if $v_jv_l\not\in E(G)$, in which case $G_1$ is properly contained in $G[v_1,v_n]$. In each case $\mathfrak{D}_2$ holds.

If $v_{i}v_{k}\in E(G)$ and $v_{k}v_{j}\in E(G)$, then we also look at $|\mathcal{V}[v_{j},v_{l}]|$.
If $|\mathcal{V}[v_{j},v_{l}]|=3$, then $\mathcal{V}[v_j,v_l]$ is  a path by the 2-connectedness of $G$, and thus $G[v_1,v_n]$ properly contains $G_{17}$ if $v_jv_l\in E(G)$, and $G_{5}$ otherwise.
If $|\mathcal{V}[v_{j},v_{l}]|=2$, then $G[v_1,v_n]$ properly contains $G_{15}$ if $v_jv_l\in E(G)$, and $d(v_j)=3,d(v_{j-1})=2$ if $v_jv_l\not\in E(G)$, in which case $G_2$ is properly contained in $G[v_1,v_n]$. In each case $\mathfrak{D}_2$ holds.

We assume now that $|\mathcal{V}[v_{k},v_{j}]|=2$.

If $v_{k}v_{j}\not\in E(G)$, then $d(v_{i+1})=2,d(v_{k})=3$ if $v_{i}v_{k}\in E(G)$, and $d(v_{i+1})=d(v_{k})=2$ if $v_{i}v_{k}\not\in E(G)$. Therefore, $G[v_{1},v_{n}]$ properly contain $G_{2}$ in the former case, and $G_{1}$ in the latter case.

If $v_{k}v_{j}\in E(G)$,  then we look at $|\mathcal{V}[v_{j},v_{l}]|$.

If $|\mathcal{V}[v_{j},v_{l}]|=3$, then $\mathcal{V}[v_j,v_l]$ is  a path by the 2-connectedness of $G$. Therefore, $G[v_1,v_n]$ properly contains $G_9$ if $|\{v_iv_k,v_jv_l\}\cap E(G)|=0$, $G_{14}$ if $|\{v_iv_k,v_jv_l\}\cap E(G)|=1$, and $G_{16}$ if $|\{v_iv_k,v_jv_l\}\cap E(G)|=2$. In each case $\mathfrak{D}_2$ holds.

If $|\mathcal{V}[v_{j},v_{l}]|=2$, then we consider two subcases. If $v_jv_l\not\in E(G)$, then $d(v_{i+1})=d(v_j)=2$ and $d(v_k)\leq 4$, which implies that $G[v_1,v_n]$ properly contains $G_3$. Adjusting the order of the vertices in the boundary of $G$ from $v_1,\ldots, v_{k-1},v_k,v_j,v_{j+1},\ldots,v_n$ to $v_1,\ldots v_{k-1},v_j,v_k,v_{j+1},\ldots,v_n$, we obtain an outer-1-planar drawing of $G+v_{i+1}v_j$, and thus $\mathfrak{D}_1$ holds.
If $v_jv_l\in E(G)$, then
$G[v_1,v_n]$ properly contains $G_{13}$ if $v_iv_k\in E(G)$, and $G_8$ otherwise. In each case $\mathfrak{D}_2$ holds.

\textbf{Subcase 4.4.} $|\mathcal{V}[v_{j},v_{l}]|=3$, and $|\mathcal{V}[v_{i},v_{k}]|,|\mathcal{V}[v_{k},v_{j}]|\leq 3$.

This is a symmetric case of Subcase 4.3, so we omit the proof here.

\textbf{Subcase 4.5.} $|\mathcal{V}[v_{i},v_{k}]|=|\mathcal{V}[v_{j},v_{l}]|=2$, and $|\mathcal{V}[v_{k},v_{j}]|\leq 3$.

If $v_iv_k,v_jv_l\in E(G)$, then $v_iv_j$ co-crosses $v_kv_l$, as desired. Hence in the following we assume  $|\{v_iv_k,v_jv_l\}\cap E(G)|\leq 1$. By symmetry, we assume that $v_iv_k\not\in E(G)$.

If $|\mathcal{V}[v_{k},v_{j}]|=3$, then by the  2-connectedness of $G$, $\mathcal{V}[v_k,v_j]$ is a path. If $v_kv_j\in E(G)$, then $d(v_{k+1})=2$, $d(v_{k})=3$ and thus $G_2$ is properly contained in $G[v_1,v_n]$. If $v_kv_j\not\in E(G)$, then $d(v_k)=d(v_{k+1})=2$ and thus $G_1$ is properly contained in $G[v_1,v_n]$.

If $|\mathcal{V}[v_{k},v_{j}]|=2$, then $v_kv_j\in E(G)$ by the  2-connectedness of $G$. If $v_jv_l\in E(G)$, then $d(v_k)=2$, $d(v_{j})=3$ and thus $G_2$ is properly contained in $G[v_1,v_n]$. If $v_jv_l\not\in E(G)$, then $d(v_k)=d(v_{j})=2$ and thus $G_1$ is properly contained in $G[v_1,v_n]$.
\end{proof}

We now come back to the proof for Case 4. By Claim A, we assume that $v_iv_j$ co-crosses $v_kv_l$ in $G$, as otherwise we have done the proof. Since $n\geq 7$, either $l\neq n$ or $i\neq 1$. We assume the former by symmetry.
If $d(v_{l})\leq7$ , then $G_{3}$ or $G_6$ or $G_7$ or $G_{12}$ is properly contained in $G[v_{1},v_{n}]$, and $\mathfrak{D}_1$, $\mathfrak{D}_2$ hold. Hence we assume $d(v_{l})\geq8$. Under this condition,
there is a chord $v_{l}v_{t}\in E(G)$ with $l<t\leq n$ or $1\leq t<i$.
If $1\leq t<i$, then $i\neq 1$ and thus $d(v_{i})\geq8$, as otherwise $G_{3}$ or $G_6$ or $G_7$ or $G_{12}$ is properly contained in $G[v_{1},v_{n}]$,  and $\mathfrak{D}_1$, $\mathfrak{D}_2$ hold. So, there is a chord $v_sv_i$ with $t\leq s<i$ (note that $v_lv_t$ can be crossed at most once).

Consequently, we have to consider the following subcases to complete the proof: (1) there is a chord $v_{l}v_{t}\in E(G)$ with $l<t\leq n$; (2) there is a chord $v_sv_i$ with $t\leq s<i$. We assume the former by symmetry, and meanwhile, assume that $t-l$ is as large as possible.

If $v_{l}v_{t}$ crosses $v_{a}v_{b}$ with $l<a<t$, then by Claim A, $v_lv_t$ co-crosses $v_av_b$, as otherwise we have finished the proof. This implies that $a=l+1$ and $b=t+1$. Since $d(v_{l})\geq8$, there is another chord $v_lv_s$ with $1\leq s<i$ or $b<s\leq n$. Without loss of generality, assume the latter. By Claim A, $v_lv_s$ is not crossed (note that $v_lv_s$ cannot be co-crossed by another edge). If $6\leq |\mathcal{V}[v_l,v_s]|<n$, then applying the induction hypotheses to the graph $G[v_l,v_s]$ (note that there is no edge between $\mathcal{V}(v_l,v_s)$ and $\mathcal{V}(v_s,v_l)$), we conclude that it properly contains one of the configurations among $G_{1}-G_{17}$ such that $\mathfrak{D}_1$ and $\mathfrak{D}_2$ hold, and so does $G[v_{1},v_{n}]$. If $|\mathcal{V}[v_l,v_s]|=5$, then by (2), $G[v_l,v_s]$ properly contains one of the configurations among $G_{1}-G_{4},G_6,G_8,G_{13}$  such that $\mathfrak{D}_1$ and $\mathfrak{D}_2$ hold, because the exclude structure mentioned in (2) cannot appear in $G[v_l,v_s]$.

On the other hand, suppose that $v_{l}v_{t}$ is not crossed. If $|\mathcal{V}[v_l,v_t]|\geq 6$, then applying the induction hypotheses to the graph $G[v_l,v_t]$, we conclude that it properly contains one of the configurations among $G_{1}-G_{17}$  such that $\mathfrak{D}_1$ and $\mathfrak{D}_2$ hold, and so does $G[v_{1},v_{n}]$. Hence we assume $|\mathcal{V}[v_l,v_t]|\leq 5$,

If there is a chord $v_lv_s$ with $1\leq s<i$, then $v_lv_s$ is not crossed by Claim A (note that $v_lv_s$ cannot be co-crossed by another edge). If $|\mathcal{V}[v_s,v_l]|\geq 6$, then applying the induction hypotheses to the graph $G[v_s,v_l]$, we conclude that it properly contains one of the configurations among $G_{1}-G_{17}$  such that $\mathfrak{D}_1$ and $\mathfrak{D}_2$ hold, and so does $G[v_{1},v_{n}]$. If $|\mathcal{V}[v_s,v_l]|=5$, then by (2), $G[v_s,v_l]$ properly contains one of the configurations among $G_{1}-G_{4},G_6,G_8,G_{13}$  such that $\mathfrak{D}_1$ and $\mathfrak{D}_2$ hold, because the exclude structure mentioned in (2) cannot appear in $G[v_s,v_l]$. Therefore, we assume that there is no such a chord $v_lv_s$ with $1\leq s<i$.

Since $|\mathcal{V}[v_l,v_t]|\leq 5$ and $t$ is an integer such that  $t-l$ is as large as possible, $d(v_l)\leq 7$, contradicting the assumption $d(v_l)\geq 8$.

This ends the proof of (3).
\end{proof}

An \emph{end-block} of a connected graph $G$ of minimum degree at least 2 is a 2-connected subgraph containing exactly one cut-vertex of $G$ if $G$ has cut-vertices (i.e, $G$ is not 2-connected), or $G$ itself if $G$ is 2-connected.

\begin{thm}\label{str-1}
Each outer-$1$-plane graph $G$ with $\delta(G)\geq 2$ contains at least one of the configurations among $G_{1}-G_{17}$ such that $\mathfrak{D}_1$ and $\mathfrak{D}_2$ hold.
\end{thm}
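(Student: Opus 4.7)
The plan is to reduce Theorem \ref{str-1} to Theorem \ref{2-connected} via an end-block argument. Since $\delta(G)\geq 2$, every end-block $B$ of $G$ is $2$-connected (a trivial end-block would contribute a degree-one vertex to $G$), and restricting the outer-1-plane drawing of $G$ to $B$ yields an outer-1-plane drawing of $B$, so Theorem \ref{2-connected} applies to $B$.

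If $G$ is itself $2$-connected, I would take $B=G$ and apply Theorem \ref{2-connected}. When $|G|\geq 6$, part (3) immediately gives the desired configuration together with $\mathfrak{D}_1,\mathfrak{D}_2$. When $|G|\in\{4,5\}$ and the corresponding exceptional drawing in (1) or (2) occurs, the graph has a very restricted structure (for instance, the exceptional $|G|=5$ case is the 5-vertex graph on edge set $\{v_1v_2,v_2v_3,v_3v_4,v_4v_5,v_1v_4,v_2v_5\}$ with degree sequence $(2,3,2,3,2)$ and crossed chords $v_1v_4,v_2v_5$), and direct inspection identifies which of $G_1$--$G_{17}$ is properly contained together with the required drawing properties.

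If $G$ is not $2$-connected, I would fix an end-block $B$ of $G$ and let $c$ denote its (unique) cut-vertex of $G$ lying in $B$. Labeling the vertices of $B$ on its outer boundary clockwise as $v_1,\ldots,v_m$ with $v_1=c$, and applying Theorem \ref{2-connected} to $B$ (supplemented by the same direct check in the small-$m$ exceptional cases as above), one obtains a configuration $G_i$ properly contained in $B$. The crucial point is the definition of \emph{proper containment}: neither $v_1$ nor $v_m$ corresponds to a solid vertex or to a marked hollow vertex of $G_i$. Consequently every vertex of $G_i$ carrying an exact-degree or upper-degree constraint lies in $B\setminus\{c\}$, hence is a non-cut-vertex of $G$ whose degree in $G$ equals its degree in $B$; meanwhile, the lower-degree constraints at unmarked hollow vertices can only be helped by any extra edges incident to $c$ outside $B$. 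Therefore $G$ contains $G_i$, and $\mathfrak{D}_1,\mathfrak{D}_2$ are inherited from the drawing of $B$ inside $G$.

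The main obstacle is the bookkeeping for the exceptional cases of Theorem \ref{2-connected}(1) and (2): each must be dispatched by a finite case analysis on a small graph $B$. However, each exceptional $B$ has at most five vertices and (up to reflection) essentially one outer-1-plane drawing, so only a handful of subcases arise. The conceptual content of the argument is that the notion of proper containment is exactly what is needed to transfer configurations from a 2-connected end-block to the whole graph without losing any of the degree or drawing constraints.
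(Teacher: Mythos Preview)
Your approach is correct and essentially identical to the paper's: reduce to an end-block $B$ with the cut-vertex labeled $v_1$, apply Theorem~\ref{2-connected} to $B$, and use proper containment to transfer the configuration to $G$. The one place to tighten is the exceptional cases $m\in\{4,5\}$ (and the omitted case $m=3$, where $B$ is a triangle and yields $G_1$ directly): there the configuration you obtain is \emph{not} properly contained in $B[v_1,v_m]$---its marked hollow vertex $y$ lands at $v_m$---so the transfer to $G$ works not through the proper-containment clause but simply because $v_m\neq c$, whence $d_G(v_m)=d_B(v_m)\le 3$; the paper makes exactly this observation.
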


\begin{proof}
Theorem \ref{2-connected}(3) implies this result for the case that
$G$ is 2-connected and $|G|\geq 6$. If $G$ is not 2-connected or $|G|\leq 5$, then let $H$ be an end-block of $G$.
Let $v_1,v_2,\ldots,v_n$ be the vertices of $H$ with a clockwise sequence in the drawing, where $n=|H|$ and only $v_1$ may be a cut-vertex. Since $\delta(G)\geq 2$, $n\geq 3$.

If $n\geq 6$, then by Theorem \ref{2-connected}(3), one of the configurations among $G_{1}-G_{17}$ is properly contained in
$K[v_1,v_n]$ such that $\mathfrak{D}_1$ and $\mathfrak{D}_2$ hold. Hence $G$ contains one of the configurations among $G_{1}-G_{17}$  such that $\mathfrak{D}_1$ and $\mathfrak{D}_2$ hold.

If $n=5$, then by Theorem \ref{2-connected}(2), $H[v_1,v_n]$ properly contains (and thus $G$ contains) one configuration among $G_{1}-G_{4},G_6,G_8,G_{13}$  such that $\mathfrak{D}_1$ and $\mathfrak{D}_2$ hold unless $\mathcal{V}[v_1,v_5]$ is a path and $v_1v_4,v_2v_5\in E(G)$, in which case $d(v_5)\leq 3$ and thus $G_7$ or $G_{12}$ is contained in $G$  such that $\mathfrak{D}_2$ holds.

If $n=4$, then by Theorem \ref{2-connected}(1), $H[v_1,v_n]$ properly contains (and thus $G$ contains) $G_1$ or $G_2$ unless $v_1v_3$ crosses $v_2v_4$ and $v_1v_2,v_3v_4\in E(G)$, in which case $d(v_4)\leq 3$ and thus $G_3$ or $G_{6}$ is contained in $G$. Clearly, $\mathfrak{D}_1$ and $\mathfrak{D}_2$ hold.

If $n=3$, then by the 2-connectedness of $H$, $v_1v_2,v_2v_3,v_3v_1\in E(H)$ and $d(v_2)=d(v_3)=2$. This implies that $G$ contains $G_1$.
\end{proof}

In the following, we first use Theorem \ref{str-1} to deduce the theorem that was recently proved in \cite{ZLLZ}.

\begin{cor}\label{cor}
$(1)$ Each outer-$1$-planar graph $G$ with $\delta(G)\geq 2$ has an edge $uv$ such that $d(u)+d(v)\leq 9$; \\
$(2)$ Each maximal outer-$1$-planar graph $G$ has an edge $uv$ such that $d(u)+d(v)\leq 7$.
\end{cor}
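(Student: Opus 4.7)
The plan is to apply Theorem~\ref{str-1} directly and perform a finite case analysis over the seventeen configurations.

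For part (1), since $\delta(G)\geq 2$, Theorem~\ref{str-1} guarantees that $G$ contains one of $G_1,\ldots,G_{17}$. In each configuration I would locate an edge $uv$ whose two endpoints have controlled degree in $G$: either both endpoints are solid vertices (whose degrees in $G$ equal exactly the number of edges incident with them in the picture) or one is solid and one is a marked hollow vertex (whose degree in $G$ is bounded above by the stated number, as in $G_3,G_6,G_7,G_{12}$). Reading off the pictures, $G_1$ supplies an edge of degree sum $4$, $G_2$ one of degree sum $5$, and a straightforward sweep through $G_3,\ldots,G_{17}$ always produces a witness edge of degree sum at most $9$. No additional ideas are needed here beyond inspection.

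For part (2), the maximality of $G$ is combined with the drawing conditions $\mathfrak{D}_1$ and $\mathfrak{D}_2$ to sharpen the bound from $9$ down to $7$. Property $\mathfrak{D}_1$ asserts that whenever $G$ contains $G_3$, one can add the edge $uv$ from the picture of $G_3$ without destroying outer-$1$-planarity; by maximality that edge already lies in $E(G)$, which effectively promotes the occurrence of $G_3$ into an occurrence of a sharper configuration in which the controlled endpoints are closer together. Property $\mathfrak{D}_2$ is used analogously: for $6\leq i\leq 17$ the drawing of $G_i$ is actually a partial drawing of $G$, so one can inspect each picture, determine which additional chords could be drawn inside the visible faces while preserving outer-$1$-planarity, and then invoke maximality to force those chords into $G$. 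This process bounds the degrees of the marked hollow vertices more tightly and, after re-reading the degree sum on the light edge in each picture, yields an edge with $d(u)+d(v)\leq 7$.

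The main obstacle is the bookkeeping in part (2): for each of the seventeen configurations one has to examine the fixed drawing, identify precisely which new edges are insertable while remaining outer-$1$-planar, and then track how the resulting forced adjacencies alter the degree of each vertex of the configuration. The case analysis is routine but lengthy; once it is carried out, reading off an edge of degree sum at most $7$ from the modified picture is immediate. Both sharpness claims—$9$ in part (1) and $7$ in part (2)—already follow from the examples constructed in~\cite{ZLLZ} and so require no further work here.
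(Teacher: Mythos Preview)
Your plan for part~(1) is correct and matches the paper exactly: apply Theorem~\ref{str-1} and read off a light edge from each of the seventeen configurations.

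For part~(2), however, you are working much harder than necessary, and your intended mechanism via $\mathfrak{D}_2$ is based on a misreading. The paper's argument is simply this: maximality forces $\delta(G)\geq 2$ (a small step you omit), so Theorem~\ref{str-1} applies; then $\mathfrak{D}_1$ is used once, to \emph{exclude} $G_3$ outright (if $G$ contained $G_3$ then $G+uv$ would be outer-1-planar, contradicting maximality---note that $u,v$ are solid vertices of degree~$2$, so $uv\notin E(G)$). With $G_3$ gone, every remaining configuration $G_1,G_2,G_4,\ldots,G_{17}$ already exhibits an edge with $d(u)+d(v)\leq 7$ (either a $2$-vertex adjacent to a vertex of degree at most~$5$, or two adjacent $3$-vertices). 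No appeal to $\mathfrak{D}_2$ is made anywhere.

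Your proposed use of $\mathfrak{D}_2$---forcing extra chords by maximality in order to ``bound the degrees of the marked hollow vertices more tightly''---does not do what you want: adding edges can only \emph{raise} degrees, so it cannot sharpen an upper bound on the degree of a hollow vertex. The reason the bound drops from~$9$ to~$7$ is not that maximality tightens hollow-vertex degrees, but that the single configuration responsible for the~$9$ (namely $G_3$, whose marked vertex $y$ may have degree up to~$7$) is ruled out. Once you see that the other sixteen configurations already supply a witness edge of weight at most~$7$, the lengthy bookkeeping you anticipate disappears.
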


\begin{proof}
(1) Since each configuration $G_i$ with $i\neq 6$ in Fig.\,1 contains an edge $uv$ with $d(u)=2$ and $d(v)\leq 7$, and the configuration $G_6$ contains an edge $uv$ with $d(u)=d(v)=3$, by Theorem \ref{str-1}, $G$ contains an edge $uv$ such that $d(u)+d(v)\leq 9$.

(2) If $G$ is a maximal outer-1-planar graph, then it is easy to see that $\delta(G)\geq 2$. If $G$ contains $G_3$ in Fig.\,\ref{structure-1}, then by $\mathfrak{D}_1$, $G+uv$ is outer-1-planar, contradicting the maximality of $G$. Hence by Theorem \ref{str-1}, $G$ contains one of the configurations among $G_{1},G_2,G_4-G_{17}$, in each of which configuration there is an edge $uv$ with either $d(u)=2$ and $d(v)\leq 5$, or $d(u)=d(v)=3$. Hence $G$ contains an edge $uv$ such that $d(u)+d(v)\leq 7$.
\end{proof}

The following is another immediate corollary from Theorem \ref{str-1}, which will be used in Section \ref{sec-3} to prove an interesting result on the list 3-dynamic coloring of outer-1-planar graphs.

\begin{cor}\label{use-to-color}
Each outer-$1$-planar graph contains one of the following configurations:
\begin{description}
  \item[(1)] a vertex of degree at most $1$;
  \item[(2)] two adjacent vertices of degree $2$;
  \item[(3)] a triangle incident with a vertex of degree $2$;
  \item[(4)] the configuration $G_i$ as in Fig.\,$\ref{structure-1}$, where $i\in \{3,6,7,8,9,10,11\}$. \hfill$\square$
\end{description}
\end{cor}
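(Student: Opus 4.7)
The plan is to deduce the corollary directly from Theorem~\ref{str-1} by a case split on $\delta(G)$ and then an inspection of the seventeen pictures in Fig.\,\ref{structure-1}. First I would handle the trivial case $\delta(G)\le 1$: such a $G$ contains a vertex of degree at most $1$, which is exactly configuration~(1). Hence I may assume $\delta(G)\ge 2$ and apply Theorem~\ref{str-1} to obtain some $G_i$ with $1\le i\le 17$ contained in $G$.

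The main step is then to dispatch on the index $i$. For $i\in\{3,6,7,8,9,10,11\}$ there is nothing more to prove, as configuration~(4) is met by definition. For $i=1$, one reads off from Fig.\,\ref{structure-1} (and from every use of $G_1$ in the proof of Lemma~\ref{n-path}, each of which was triggered by two consecutive boundary vertices of degree $2$) that $G_1$ contains two adjacent degree-$2$ vertices, giving configuration~(2).

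The remaining indices $i\in\{2,4,5,12,13,14,15,16,17\}$ form the substantive block. Here I would verify by a single inspection of Fig.\,\ref{structure-1} that each of these nine configurations contains a triangle incident with a degree-$2$ vertex: tracing back through the proof of Theorem~\ref{2-connected}, each such $G_i$ was produced exactly when a solid degree-$2$ vertex sat between two consecutive outer-boundary vertices joined by a chord (for instance, the chords $v_3v_5$, $v_jv_l$, or $v_iv_k$ appearing in the relevant subcases), and that chord together with the two edges at the $2$-vertex is the triangle demanded by configuration~(3). The only obstacle is careful bookkeeping across the nine cases to confirm that the triangle uses a truly degree-$2$ (solid) vertex rather than a marked hollow one; once the pictures are examined this is mechanical, and no new idea is required beyond Theorem~\ref{str-1}.
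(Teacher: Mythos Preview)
Your proposal is correct and is exactly the intended argument: the paper presents this corollary with no proof beyond the $\square$, calling it an immediate consequence of Theorem~\ref{str-1}, and what you have written is precisely the routine case-check that justifies this---split on $\delta(G)$, invoke Theorem~\ref{str-1} when $\delta(G)\ge 2$, and observe from Fig.\,\ref{structure-1} that $G_1$ yields configuration~(2), each $G_i$ with $i\in\{2,4,5,12,13,14,15,16,17\}$ yields configuration~(3), and the remaining indices are covered by~(4).
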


To end this section, we show that the list of the configurations in Theorem \ref{str-1} is minimal in the sense that for each configuration $G_i$ with $1\leq i\leq 17$,
there are outer-1-planar graphs containing $G_i$ that do not have any of another sixteen configurations.

\begin{figure}
  \centering
  \includegraphics[width=10cm]{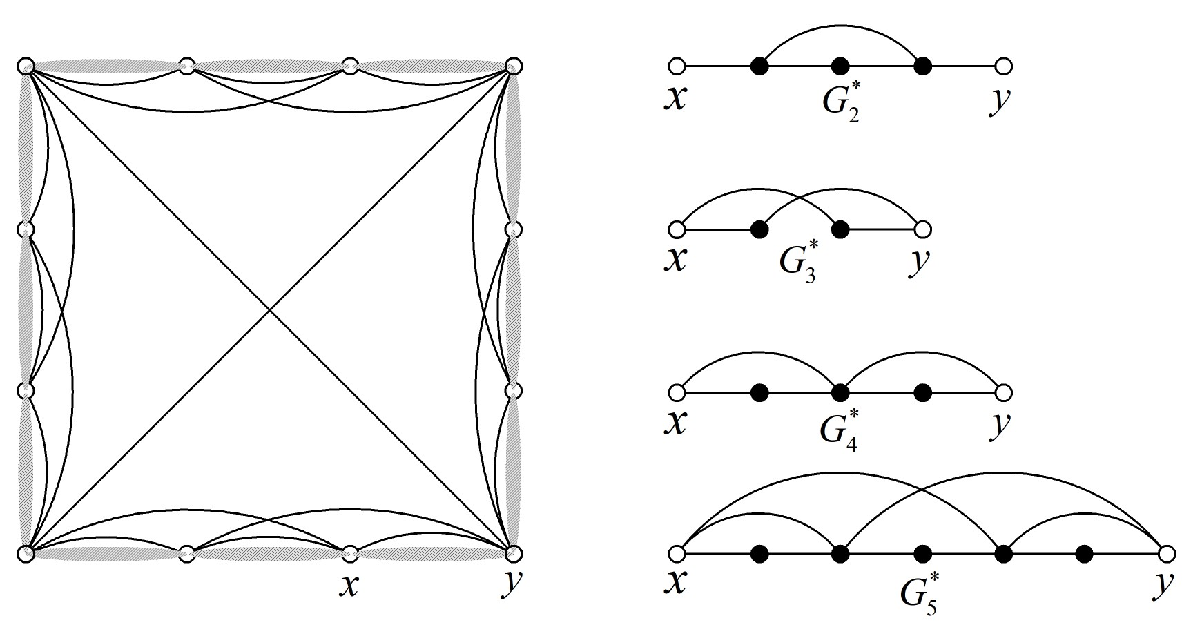}
  \caption{The construction of special outer-1-planar graphs}\label{minimal}
\end{figure}

Trivially, a cycle contains $G_1$ and does not contain $G_i$ for any $2\leq i\leq 17$. We now look at the left picture in Fig.\,\ref{minimal}. Into each of the shadowed areas, we embed the configurations $G^*_i$ with $2\leq i\leq 17$ so that $x$ and $y$ are end
vertices (we do not care about the direction of the embedding of the configuration, although some configuration, say $G_{14}$ for example, is not symmetric in its drawing), and denote the resulting graph by $H_i$. Here, $G^*_i$ with $2\leq i\leq 5$ is shown as in Fig.\,\ref{minimal}, and $G^*_i$ with $6\leq i\leq 17$ corresponds to $G_i$ in Fig.\,\ref{structure-1}. It is easy to check that the graph $H_i$ with $2\leq i\leq 17$ is an outer-1-planar graph that contains $G_i$ and does not contain $G_j$ with $j\neq i$. This also implies the sharpness of Corollary \ref{cor}.

\section{List 3-Dynamic Coloring}\label{sec-3}

A \emph{proper $k$-coloring} $c$ of a graph $G$ is a function from its vertex set $V(G)$ to $\{1,2,\ldots,k\}$ such that $c(u)\neq c(v)$ if $u$ is adjacent to $v$.
An \emph{$r$-dynamic $k$-coloring} of a graph $G$ is a proper $k$-coloring such that for any vertex $v$, there are at least $\min\{r,d(v)\}$ distinct colors appearing on the neighbors of $v$. The minimum integer $k$ so that $G$ has a proper $k$-coloring or an $r$-dynamic $k$-coloring is the \emph{chromatic number} or \emph{$r$-dynamic chromatic number} of $G$, denoted by $\chi(G)$ or $\chi_r(G)$, respectively. Clearly, $\chi_r(G)\geq \chi_1(G)=\chi(G)$, where $r\geq 1$.

The notion of $r$-dynamic coloring was introduced by \cite{M01}, newly
studied by \cite{JKO16,KMW15,KUAD20,ZB18}, and also investigated under
the notion of $r$-hued coloring, see
\cite{CFL20,CLL18,L21,MHK18,SFC14,SL18,SLW16,ZCM17}. As starting cases
of $r$-dynamic coloring, the $2$-dynamic coloring (known as dynamic
coloring in literature), see
\cite{A11,AGJ,AGJ14-1,AGJ14-2,BS12,CFL12,KLO,LMR18,MLG16,MMS06}, and
the $3$-dynamic coloring, see \cite{AKK18,KP18,LL17}, have been considered.
The list analogue of dynamic coloring was introduced by \cite{AGJ}, and investigated by many authors
including \cite{E10,Gu21,KLP,KP18,KP,LL17,ZCM17,ZL21,ZB20}.

Suppose that a set $L(v)$ of colors, called a \emph{list} of $v$, is assigned to each vertex $v\in V(G)$.
An \emph{$r$-dynamic $L$-coloring} of $G$ is an $r$-dynamic coloring $c$ so that $c(v)\in L(v)$ for every $v\in V(G)$. A graph $G$ is \emph{$r$-dynamic $k$-choosable} if
$G$ has an $r$-dynamic $L$-coloring whenever $|L(v)|=k$ for every $v\in V(G)$. The minimum integer $k$ for which $G$ is $r$-dynamic $k$-choosable is the \emph{list $r$-dynamic chromatic number} of $G$, denoted by $ch_r(G)$. It is obvious that $ch_r(G)\geq \chi_r(G)$.

\begin{figure}
  \centering
  \includegraphics[width=4cm]{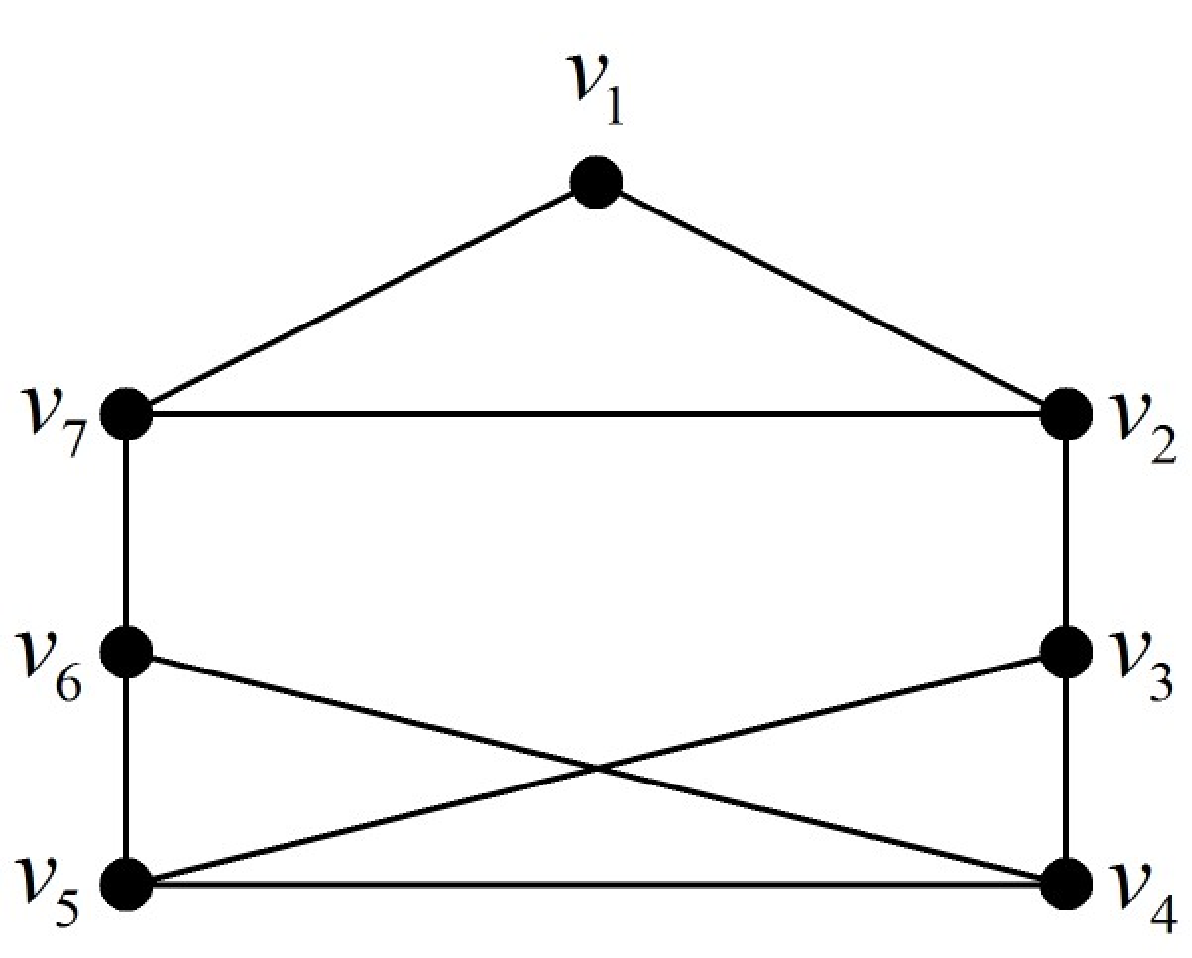}
  \caption{An outer-1-planar graph with $3$-dynamic chromatic number 6}\label{o1p-6}
\end{figure}

In this section, we apply the structural theorem obtained in Section \ref{sec-2} (precisely, Corollary \ref{use-to-color}) to prove that the list $3$-dynamic chromatic number of every outer-1-planar graph is at most 6, and moreover, this upper bound 6 is sharp because of the existence of an outer-1-planar graph with $3$-dynamic chromatic number 6, see Theorem \ref{sharp}.

\begin{thm}\label{sharp}
  There exists an outer-1-planar graph with $3$-dynamic chromatic number $6$.
\end{thm}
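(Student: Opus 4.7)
The plan is to prove existence by exhibiting a specific outer-1-planar graph $G$, namely the one depicted in Fig.~\ref{o1p-6}, and then verifying both $\chi_3^d(G)\leq 6$ and $\chi_3^d(G)\geq 6$. The upper bound is routine once $G$ is fixed: I would present a concrete $3$-dynamic $6$-coloring by labelling each vertex of $G$ with an element of $\{1,2,\ldots,6\}$ and then checking directly that (i) adjacent vertices receive different labels and (ii) every vertex $v$ with $d(v)\geq 3$ sees at least three distinct colors on $N(v)$. Since $G$ has a bounded number of vertices, this amounts to a finite check that can be displayed alongside the drawing.

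The substantive part is the lower bound $\chi_3^d(G)\geq 6$. Here I would argue by contradiction: assume $c$ is a proper $5$-coloring of $G$ that is $3$-dynamic, and derive a conflict from the local structure. The strategy is to locate in $G$ a vertex $v$ of relatively high degree (say, degree $\geq 4$) whose neighborhood is sufficiently interconnected so that the 3-dynamic requirement propagates strong constraints. Concretely, I would (a) use the proper coloring condition to restrict the colors available to each neighbor of $v$, (b) use the 3-dynamic condition at several vertices in or near $N(v)$ to force certain pairs of neighbors of $v$ to differ, and (c) conclude that the colors appearing on $N[v]$ must exhaust a set of size $\geq 6$, contradicting the assumption. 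A convenient way to organize this is to identify a small "core" subgraph of $G$ (for instance a configuration of two adjacent triangles sharing an edge, or a $K_4$-like gadget drawn with one crossing) on which the 3-dynamic constraints alone already demand six distinct colors.

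The main obstacle I anticipate is choosing the witnessing graph $G$ so that the forcing argument is clean. If $G$ is too small, there will typically be room to permute colors and satisfy the constraints with $5$ colors; if $G$ is too large, the case analysis for the contradiction becomes tedious. I would therefore try to design (or read off from Fig.~\ref{o1p-6}) a graph containing a small rigid "core" where a $5$-coloring immediately fails, while the rest of $G$ is peripheral and irrelevant to the lower bound. Once the core is identified, the contradiction should reduce to a short symbolic argument about how many distinct colors must appear in a specific neighborhood, rather than an exhaustive check.

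Finally, I would remark that together with the upper bound $ch_3^d(G)\leq 6$ to be proved in the next theorem (and the trivial inequality $ch_3^d(G)\geq \chi_3^d(G)$), this establishes that the bound $6$ in the main coloring result is best possible, which is precisely the role this theorem plays in the paper.
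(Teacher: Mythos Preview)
Your outline has the right overall shape (exhibit a small graph, verify a $6$-coloring, rule out a $5$-coloring), but the lower-bound strategy you sketch points in the wrong direction and, as written, would not succeed. You propose to focus on a vertex of ``relatively high degree (say, degree $\geq 4$)'' and to show that the $3$-dynamic condition forces six colors on its closed neighborhood. But the $3$-dynamic constraint at a vertex $v$ only demands $\min\{3,d(v)\}$ distinct colors on $N(v)$; once $d(v)\geq 3$ this is just three, so a high-degree vertex by itself never forces more than four colors on $N[v]$. The leverage in a $3$-dynamic argument comes from vertices of degree \emph{exactly} $3$: there every neighbor must receive a different color, and chaining several such vertices together is what pushes the color count up.

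The paper's proof does precisely this. In the seven-vertex graph of Fig.~\ref{o1p-6}, one starts at the degree-$3$ vertex $v_3$, which forces $v_2,v_3,v_4,v_5$ to be pairwise distinct (colors $1,2,3,4$). Then the degree-$3$ constraints at $v_5$ and at $v_7$ eliminate $1,2,3$ from $v_6$, forcing a fifth color there; finally the constraints at $v_6$ and $v_1$ (both degree $3$) eliminate all five used colors from $v_7$, forcing a sixth. The extension to $v_1$ with color $3$ simultaneously supplies the upper bound. So the missing idea in your plan is this propagation through a \emph{path of degree-$3$ vertices}, not a single hub; once you replace ``high-degree core'' by ``chain of degree-$3$ vertices'' the argument becomes the short forcing computation the paper gives.
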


\begin{proof}
  Look at the outer-1-planar graph $G$ in Fig.\,\ref{o1p-6}. We claim that its $3$-dynamic chromatic number is exactly 6. Since $v_3$ has degree 3 and $v_2,v_4,v_5$ are its neighbors, those four vertices have distinct colors in any 3-dynamic coloring $G$. Without loss of generalization, assume that $v_2,v_3,v_4$ and $v_5$ are colored with 1,2,3 and 4, respectively.
It is clear that $v_6$ cannot be colored by 2 or 3 (otherwise two neighbors of $v_5$, which has degree 3, are monochromatic), and also cannot be colored by 1 (otherwise two neighbors of $v_7$, which has degree 3, are monochromatic). Therefore, we assume that $v_6$ is colored with 5 (note that the color 4 is forbidden on $v_6$ since it is adjacent to $v_5$ that has color 4). At this stage, the colors $1,2,3,4$ and $5$ are forbidden on $v_7$ (otherwise two adjacent vertices receive a same color, or a vertex of degree 3 has two monochromatic neighbors). Hence we have to color $v_7$ with 6, and then color $v_1$ with 3. This implies that  the $3$-dynamic chromatic number of $G$ is exactly 6.
\end{proof}

\begin{thm}\label{list-3-dynamic}
If $G$ is an outer-$1$-planar graph, then $ch_3(G)\leq 6$.
\end{thm}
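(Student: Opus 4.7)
The plan is to prove Theorem \ref{list-3-dynamic} by induction on $|V(G)|$. Small base cases are disposed of trivially: any outer-1-planar graph on at most seven vertices is 3-dynamic 6-choosable, by a direct greedy argument. For the inductive step, fix a 6-list assignment $L$, and note that every induced subgraph of an outer-1-planar graph is outer-1-planar, so the induction hypothesis applies after any vertex deletion.

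By Corollary \ref{use-to-color}, $G$ contains one of four types of reducible configurations. In each case the plan is to take a small set $S$ of low-degree vertices, apply the inductive hypothesis to the outer-1-planar graph $G-S$ to obtain a 3-dynamic $L$-coloring of $G-S$, and then extend it by greedily colouring the vertices of $S$ one by one from their lists. The number of colours forbidden at a vertex $v\in S$ when it is coloured is at most the sum of three contributions: (a) the colours of already-coloured neighbours of $v$; (b) for each already-coloured neighbour $w$ of $v$ with $d_G(w)\le 3$, the at most $2$ colours appearing on $N_G(w)\setminus\{v\}$ that $c(v)$ must avoid to complete the 3-dynamic condition at $w$; and (c) when $d_G(v)=2$, $c(v)$ must avoid the colours of both its neighbours, and we need them to have distinct colours among the already-coloured vertices for the 3-dynamic condition at $v$ to hold. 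In cases (1), (2), and (3) of Corollary \ref{use-to-color}, a routine tally with a suitably chosen colouring order shows a total of at most $3$, $5$, and $4$ forbidden colours respectively, comfortably within the budget of $|L(v)|=6$; in case (2) one colours the first degree-2 vertex (at most $4$ forbidden) and then the second (at most $5$ forbidden), and in case (3) the edge within the triangle automatically separates the two triangle-neighbour colours, killing contribution (c).

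Case (4), where $G$ contains $G_i$ for some $i\in\{3,6,7,8,9,10,11\}$, is the main obstacle and is handled by individual analysis of each configuration. In each of $G_3,G_7,G_8,G_9,G_{10},G_{11}$, the picture displays a vertex $u$ of degree exactly $2$ whose neighbours have degrees in $G$ bounded by the picture, so I will delete $u$ (together with a second low-degree vertex when one is clearly available, e.g.\ in $G_9,G_{10}$) and rely on property $\mathfrak{D}_2$, which says the picture of $G_i$ is an actual partial drawing of $G$, to read off exactly which colours are forbidden at $u$; in every case the tally in (a)--(c) stays at most $5$. The hardest case is $G_6$, which contains two adjacent vertices $u,v$ of degree exactly $3$ (with no vertex of degree $2$ available locally); here I will delete $\{u,v\}$, colour $v$ first and then $u$, and use a short case analysis on whether two specific already-coloured neighbours happen to receive the same colour in the inductive coloring. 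The crux of the proof is that in $G_6$ essentially every constraint is tight, and I expect the main difficulty to lie in arguing that the flexibility built into a list of size $6$ (together with the freedom to possibly recolour $v$ with a second choice from $L(v)$) guarantees a valid pair $(c(v),c(u))$ in all subcases.
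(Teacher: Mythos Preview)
Your overall strategy---induction on $|V(G)|$ using Corollary \ref{use-to-color}---is exactly what the paper does, and your treatment of cases (1)--(3) is fine. The real problems are in case (4), specifically with $G_6$ and $G_{11}$.

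For $G_6$ your plan is to delete both degree-$3$ vertices $u$ and $v$. But in $G_6$ the structure is that $u$ and $v$ are \emph{each} adjacent to \emph{both} hollow vertices $x$ and $y$ (and to one another). In $G-\{u,v\}$ the vertices $x$ and $y$ are not adjacent and have no common surviving neighbour of small degree that would force their colours apart, so the induced $3$-dynamic colouring may well have $c(x)=c(y)$. In that event the $3$-dynamic condition at $u$ (degree $3$, neighbours $x,y,v$) demands $c(x),c(y),c(v)$ pairwise distinct, which is impossible regardless of how you choose $c(u)$ or $c(v)$; your proposed ``recolour $v$'' move cannot help, and recolouring $x$ or $y$ is not available since $x$ has unbounded degree and $y$ can have degree up to $7$. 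The fix, and the paper's actual argument, is to delete \emph{only} $u$: then $v$ has degree $2$ in $G-u$, so the $3$-dynamic condition at $v$ already forces $c(x)\neq c(y)$, and a straightforward five-colour count finishes. So $G_6$ is not the hardest case at all once you choose the right set $S$.

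The configuration you underestimate is $G_{11}$. Here there are three solid degree-$2$ vertices (call them $v,u,a$) separated by two solid degree-$3$ vertices $z,w$, with $u$ in the middle. If you delete only $u$, then $z$ and $w$ drop to degree $2$ in $G-u$ but are non-adjacent there, so $c(z)=c(w)$ can occur and the $3$-dynamic condition at $u$ fails; a single recolouring of $z$ or $w$ can itself face six constraints. The paper deletes all three degree-$2$ vertices $\{u,v,a\}$, and even then the greedy count for $u$ can hit six forbidden colours (when $d(x)=d(y)=3$ and the six relevant vertices are rainbow with $L(u)$ equal to that set). A genuine recolouring argument is required: one shows that otherwise $L(z)$ and $L(a)$ are pinned down, recolours both $z$ and $a$ to the common colour $c(w)$, and then redoes $w,v,u$ in order. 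Your blanket claim that ``in every case the tally stays at most $5$'' is false for $G_{11}$, and a similar (though easier) issue of non-adjacent equal-coloured neighbours already arises in $G_7$ and $G_8$ if you delete only the single degree-$2$ vertex; there too the paper removes two vertices.
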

\begin{proof}
Let $G$ be a counterexample to the theorem statement with the smallest number of vertices. It follows that there exists a list assignment $L$ of size 6 such that $G$ has no 3-dynamic $L$-coloring and any proper subgraph of $G$ is 3-dynamic $L$-colorable. Clearly, $G$ is connected. In what follows, we prove a series of propositions contradicting Corollary \ref{use-to-color} to complete the proof.

\begin{description}
  \item[(1)] $\delta(G)\geq 2$.
\end{description}

Suppose, to the contrary, that there is an edge $uv$ with $d(u)=1$. By the minimality of $G$, the graph $G'=G-u$ has a 3-dynamic $L$-coloring $c$. It is easy to see that $d(v)\geq 2$, because otherwise $G$ is exactly $K_2$ that is 3-dynamic $L$-colorable, a contradiction. If $d(v)\geq 4$, then $v$ has degree at least 3 in $G'$ and thus $v$ is incident with at least three distinct colors in $c$. In this case we color $u$ from its list with a color different from $c(v)$, and then obtain a 3-dynamic $L$-coloring of $G$, a contradiction. If $d(v)\leq 3$, then color $u$ from its list with a color that is different from the colors used on $v$ and its neighbor(s) in $G'$. This also constructs a 3-dynamic $L$-coloring of $G$, a contradiction.

\begin{description}
  \item[(2)] \textit{$G$ does not contain two adjacent vertices of degree $2$.}
\end{description}

Suppose, to the contrary, that there is an edge $uv$ with $d(u)=d(v)=2$. By the minimality of $G$, the graph $G'=G-\{u,v\}$ has a 3-dynamic $L$-coloring $c$. By $x$ and $y$, we denote the other neighbor of $u$ and $v$ besides $v$ and $u$, respectively.

Assume first that $d(x)\geq 4$, then color $u$ with $c(u)\in L(u)\backslash \{c(x),c(y)\}$. If $d(y)\geq 4$, then color $v$ with $c(v)\in L(v)\backslash \{c(x),c(u),c(y)\}$. If $d(y)\leq 3$, then color $v$ from its list with a color that is different from $c(x)$, $c(u)$, $c(y)$ and the colors (at most two) used on the neighbor(s) of $y$ in $G'$. In each case, at most five colors are forbidden and we have six available colors for $v$. Hence we obtain a 3-dynamic $L$-coloring of $G$, a contradiction.

Second, assume that $d(x)\leq 3$, and by symmetry, that $d(y)\leq 3$. Coloring $u$ with a color $c(u)$ from its list that is different from $c(x),c(y)$ and the colors (at most two) used on the neighbor(s) of $x$ in $G'$, and then coloring $v$ from its list with a color that is different from $c(x),c(y),c(u)$ and the colors (at most two) used on the neighbor(s) of $y$ in $G'$, we construct a 3-dynamic $L$-coloring of $G$, a contradiction.

\begin{description}
  \item[(3)] \textit{$G$ does not contain a triangle $xuyx$ in $G$ with $d(u)=2$.}
\end{description}

Suppose, to the contrary, that $G$ contains a triangle $xuyx$ with $d(u)=2$.
By the minimality of $G$, $G'=G-\{u\}$ has a 3-dynamic $L$-coloring $c$. By (2), $d(x)\geq 3$ and $d(y)\geq 3$.
Assume first that $d(x)\geq 4$. If $d(y)\geq 4$, then color $u$ from its list with a color different from $c(x)$ and $c(y)$. If $d(y)=3$, then color $u$ from its list with a color different from $c(x),c(y)$ and $c(y_1)$, where $y_1$ is the third neighbor of $y$ other than $x$ and $u$.
In each case,
we obtain a 3-dynamic $L$-coloring of $G$, a contradiction.
Second, assume that $d(x)=3$, and by symmetry, that $d(y)=3$.
Let $x_1$ be the neighbor of $x$ other than $u$ and $y$, and let $y_1$ be the neighbor of $y$ other than $u$ and $x$. We
color $u$ with $c(u)\in L(u)\backslash \{c(x),c(y),c(x_1),c(y_1)\}$, and then obtain a 3-dynamic $L$-coloring of $G$, a contradiction. Note that $c(x)\neq c(y)$ since $xy\in E(G')$.

\begin{description}
  \item[(4)] \textit{$G$ does not contain the configuration $G_3$.}
\end{description}

Suppose, to the contrary, that $G$ contains a copy of $G_{3}$ as in Fig.\,\ref{structure-1}.
By the minimality of $G$, $G'=G-\{u\}$ has a 3-dynamic $L$-coloring $c$. By (2), we have $d(x)\geq 3$ and $d(y)\geq 3$. Assume first that $d(x)\geq 4$. If $d(y)\geq 4$, then color $u$ from its list with a color different from $c(x)$ and $c(y)$. If $d(y)=3$, then color $u$ from its list with a color different from $c(x),c(y),c(v)$ and $c(y_1)$, where $y_1$ is the third neighbor of $y$ other than $v$ and $u$. In each case, we obtain a 3-dynamic $L$-coloring of $G$, a contradiction. Second, assume that $d(x)=3$, and by symmetry, that $d(y)=3$. Let $x_1$ be the neighbor of $x$ other than $u$ and $v$, and let $y_1$ be the neighbor of $y$ other than $u$ and $v$. We color $u$ with $c(u)\in L(u)\backslash \{c(x),c(y),c(x_1),c(y_1),c(v)\}$, and then obtain a 3-dynamic $L$-coloring of $G$, a contradiction. Note that $c(x)\neq c(y)$ since $x$ and $y$ are the only two neighbors of $v$ in $G'$.

\begin{description}
  \item[(5)] \textit{$G$ does not contain the configuration $G_6$.}
\end{description}

Suppose, to the contrary, that $G$ contains a copy of $G_{6}$ as in Fig.\,\ref{structure-1}.
By the minimality of $G$, $G'=G-\{u\}$ has a 3-dynamic $L$-coloring $c$. By (3), we have $d(x)\geq 3$ and $d(y)\geq 3$. Assume first that $d(x)\geq 4$. If $d(y)\geq 4$, then color $u$ from its list with a color different from $c(x)$, $c(y)$ and $c(v)$. If $d(y)=3$, then color $u$ from its list with a color different from $c(x),c(y),c(v)$ and $c(y_1)$, where $y_1$ is the third neighbor of $y$ other than $v$ and $u$. In each case, we obtain a 3-dynamic $L$-coloring of $G$, a contradiction.
Second, assume that $d(x)=3$, and by symmetry, that $d(y)=3$. Let $x_1$ be the neighbor of $x$ other than $u$ and $v$, and let $y_1$ be the neighbor of $y$ other than $u$ and $v$. We color $u$ with $c(u)\in L(u)\backslash \{c(x),c(y),c(x_1),c(y_1),c(v)\}$, and then obtain a 3-dynamic $L$-coloring of $G$, a contradiction. Note that $c(x),c(y)$ and $c(v)$ are pairwise different since $v$ has only two neighbors $x$ and $y$ in $G'$.

\begin{description}
  \item[(6)] \textit{$G$ does not have a chordless quadrilateral $vuwxv$ such that $d(v) = d(w) = 3$, $d(u) = 2$, $v, u,w$ appear in the outer boundary of $G$ consecutively in that order, and $G-\{u\}+vw$ is outer-$1$-planar. This directly implies that $G$ does not contain the configuration $G_7$ or $G_{10}$.}
\end{description}

If such a quadrilateral exists, then $G-\{u\}+vw$ has a 3-dynamic $L$-coloring $c$ by the minimality of $G$.
By choosing a color for $u$ from
$L(u) \setminus \{c(x), c(v), c(w), c(v_1), c(w_1)\}$, we obtain a 3-dynamic $L$-coloring of
$G$, where $v_1$ is the neighbor of $v$ other than $u, x$ and $w_1$ is the neighbor
of $w$ other than $u, x$ (possibly $v_1 = w_1$).


\begin{description}
  \item[(7)] \textit{$G$ does not contain the configuration $G_8$.}
\end{description}

Suppose, to the contrary, that $G$ contains a copy of $G_{8}$ as in Fig.\,\ref{structure-1}.
By the minimality of $G$, $G'=G-\{u,w\}$ has a 3-dynamic $L$-coloring $c$.  By (2) and (3),  $d(x)\geq 3$ and $d(y)\geq 3$. By (6), $d(x)\geq 4$. If $d(y)\geq 4$, then color $w$ with $c(w)\in L(w)\backslash \{c(x),c(y),c(v)\}$ and $u$ with
$c(u)\in L(u)\backslash \{c(x),c(y),c(w),c(v)\}$. If $d(y)=3$, then color $w$ with $c(w)\in L(w)\backslash \{c(x),c(y),c(v),c(y_1)\}$ and $u$ with $c(u)\in L(u)\backslash \{c(x),c(y),$\\$c(w),c(v)\}$, where $y_1$ is the third neighbor of $y$ other than $v$ and $w$.
In each case, we obtain a 3-dynamic $L$-coloring of $G$, a contradiction.

\begin{description}
  \item[(8)] \textit{$G$ does not contain the configuration $G_9$.}
\end{description}

Suppose, to the contrary, that $G$ contains a copy of $G_{9}$ as in Fig.\,\ref{structure-1}.
By the minimality of $G$, $G'=G-\{u,w\}$ has a 3-dynamic $L$-coloring $c$.  By (2),  $d(x)\geq 3$ and $d(y)\geq 3$. By (6), $d(x)\geq 4$. If $d(y)\geq 4$, then color $w$ with $c(w)\in L(w)\backslash \{c(x),c(y),c(v),c(z)\}$ and $u$ with
$c(u)\in L(u)\backslash \{c(x),c(y),c(w),c(v)\}$. If $d(y)=3$, then color $w$ with $c(w)\in L(w)\backslash \{c(x),c(y),c(v),c(y_1),c(z)\}$ and $u$ with $c(u)\in L(u)\backslash \{c(x),$\\$c(y),c(w),c(v)\}$, where $y_1$ is the third neighbor of $y$ other than $z$ and $w$.
In each case, we obtain a 3-dynamic $L$-coloring of $G$, a contradiction.

\begin{description}
  \item[(9)] \textit{$G$ does not contain the configuration $G_{11}$.}
\end{description}

Suppose, to the contrary, that $G$ contains a copy of $G_{11}$ as in Fig.\,\ref{structure-1}. By (2), $d(x)\geq 3$ and $d(y)\geq 3$.
By the minimality of $G$,
$G'=G-\{u,v,a\}$ has a 3-dynamic $L$-coloring $c$.
If $d(x)\geq 4$ and $d(y)\geq 4$, then color $u,v$ and $a$ in this order with
$c(u)\in L(u)\backslash  \{c(x),c(y),c(z),c(w)\}$, $c(v)\in L(v) \backslash  \{c(x),c(y),c(u),$\\$c(w)\}$ and $c(a)\in L(a)\backslash \{c(x),c(y),c(u),c(v)\}$, respectively.
If $d(x)\geq 4$ and $d(y)=3$ (the case when $d(x)=3$ and $d(y)\geq 4$ is similar),
then color $u,a$ and $v$ in this order with
$c(u)\in L(u)\backslash  \{c(x),c(y),c(z),$\\$c(w)\}$, $c(a)\in L(a)\backslash \{c(w),c(y),c(y_1),c(u),c(x)\}$ and
$c(v)\in L(v) \backslash  \{c(x),c(y),c(u),c(w),c(a)\}$, respectively, where $y_1$ is the third neighbor of $y$ other than $w$ and $a$. In each case, we obtain a 3-dynamic $L$-coloring of $G$, a contradiction. Hence in the following, we assume that $d(x)=d(y)=3$.

Let $x_1$ be the third neighbor of $x$ other than $v$ and $z$, and let $y_1$ be the third neighbor of $y$ other than $w$ and $a$.
By the minimality of $G$, $G'=G-\{u,v,a\}$ has a 3-dynamic $L$-coloring $c$. Color $v$ with $c(v)\in L(v)\backslash \{c(x),c(x_1),c(z),c(w),c(y)\}$ and $a$ with $c(a)\in L(a)\backslash \{c(v),c(y),c(y_1),c(w),c(x)\}$.
If there is a color available for $u$ that is different from the colors used on $x,z,w,v,a$ and $y$, then use it to color $u$ and we immediately obtain a 3-dynamic $L$-coloring of $G$, a contradiction. So, the worst case here is the colors on $x,z,w,v,a$ and $y$ are rainbow, say $1,2,3,4,5$ and 6, and moreover, $L(u)=\{1,2,3,4,5,6\}$.

At this stage, we color $u$ with 2 (resp.\,5) and then try to recolor $z$ (resp.\,$a$). If it is possible to recolor $z$ (resp.\,$a$) with a color different from $1,2,3,4,6$ (resp.\,$1,3,4,5,6$), and different from the color on $x_1$ (resp.\,$y_1$), then we obtain a 3-dynamic $L$-coloring of $G$, a contradiction. So, the difficult case is that $L(z)=\{1,2,3,4,6,c(x_1)\}$ and $L(a)=\{1,3,4,5,6,c(y_1)\}$, where $3\not\in \{c(x_1),c(y_1)\}$. We now recolor $z$ and $a$ with 3, and
recolor $w,v$ and $u$ in this order with $c(w)\in L(w)\backslash \{1,3,6,c(y_1)\}$, $c(v)\in L(v)\backslash \{1,3,6,c(w),c(x_1)\}$ and $c(u)\in L(u)\backslash \{1,3,6,c(v),c(w)\}$, respectively. It is easy to see that the resulting coloring of $G$ is a 3-dynamic $L$-coloring, a contradiction.
\end{proof}



\newcommand{\etalchar}[1]{$^{#1}$}

\end{document}